\documentclass[11pt]{article}

\usepackage{amsmath, amsthm, amssymb, latexsym, graphicx,courier,makeidx,subeqnarray}

\usepackage{color}

\textwidth=16.5cm
\textheight=22.5cm
\topmargin -1.5cm
\oddsidemargin-0.5cm
\evensidemargin-0.5cm
\linespread{1.04}

\newtheorem{theorem}{Theorem}[section]
\newtheorem{definition}[theorem]{Definition}
\newtheorem{lemma}[theorem]{Lemma}

\newtheorem{problem}[theorem]{Problem}
\newtheorem{corollary}{Corollary}[section]
\newtheorem{remark}{Remark}[section]

%%%%%%%%%%%%%%%%%%%%

%\textheight=9 true in \textwidth=6 true in \topmargin 0 cm
%\hoffset=-0.9cm \voffset=-1.5cm \frenchspacing

\begin{document}
	
\title{Inverse Problems for Nonlinear Quasi-Variational Inequalities with an Application to Implicit Obstacle Problems of $p$-Laplacian Type
\thanks{\, Project supported by the European Union's Horizon 2020 Research and Innovation Programme under the Marie Sk{\l}odowska-Curie grant agreement No. 823731 CONMECH, National Science Center of Poland under Maestro Project No. UMO-2012/06/A/ST1/00262, and
National Science Center of Poland under Preludium Project No. 2017/25/N/ST1/00611.
It is also supported by the Beibu Gulf University, Project No. 2018KYQD06, Natural Science Foundation of Guangxi (Grant No. 2018JJA110006), and
the International Project co-financed by the Ministry of Science and Higher Education of Republic of Poland under Grant No. 3792/GGPJ/H2020/2017/0.}}

\author{
Stanis{\l}aw Mig\'orski \footnotemark[2] \and
Akhtar A. Khan \footnotemark[3]
\and
Shengda Zeng \footnotemark[4]}
\renewcommand{\thefootnote}{\fnsymbol{footnote}}
\footnotetext[2]{\,College of Applied Mathematics, Chengdu University of Information Technology, Chengdu,  610225, Sichuan Province, P.R. China, and Jagiellonian Uni\-ver\-sity in Krakow, Chair of Optimization and Control, ul. Lojasiewicza 6, 30348 Krakow, Poland.
%%Tel.: +48-12-6646666.
({\tt sta\-nis\-law.migorski@uj.edu.pl})}
\footnotetext[3]{\, Center for Applied and Computational Mathematics, School of Mathematical Sciences,
Rochester Institute of Technology, 85 Lomb Memorial Drive, Rochester, New York, 14623, USA.
({\tt aaksma@rit.edu})}
\footnotetext[4]{\, Jagiellonian University in Krakow, Faculty of Mathematics and Computer Science, ul. Lojasiewicza~6, 30348 Krakow, Poland. Corresponding author.
%%Tel.: +86-18059034172.
%%E-mail address:	
({\tt shengdazeng@gmail.com, shdzeng@hotmail.com, zeng\-shengda@163.com})}

\renewcommand{\thefootnote}{\fnsymbol{footnote}}

\date{}
\maketitle

\begin{abstract}\noindent
The primary objective of this research is to investigate an inverse problem of parameter identification in nonlinear mixed quasi-variational inequalities posed in a Banach space setting. By using a fixed point theorem, we explore properties of the solution set of the considered quasi-variational inequality. We develop a general regularization framework to give an existence result for the inverse problem. Finally, we apply the abstract framework to a concrete inverse problem of identifying the material parameter in an implicit obstacle problem given by an operator of $p$-Laplacian type.

\noindent
{\bf Key words.} Inverse problems, nonlinear quasi-variational inequality, regularization, $p$-Laplacian, obstacle problem.

\noindent
{\bf 2010 Mathematics Subject Classification.} 35R30, 49N45, 65J20, 65J22, 65M30.
\end{abstract}

\section{Introduction}\label{Introduction}
%%%%%%%%%%%%%%%%%%%%%%
Variational inequalities provide a powerful mathematical tool to explore a broad spectrum of vi\-tal problems such as obstacle problems, unilateral contact problems, optimization and control problems, traffic network models, equilibrium problems, and many others, see~\cite{liu2,liu1,sofonea1,tang1}.
In the study of classical variational inequalities, the constraint set remains independent of the state variable.
However, in many important situations arising in engineering and economic models, such as Nash equilibrium problems with shared constraints and transport optimization feedback control problems, the constraint sets directly depend on the unknown state variable.
This leads naturally to the notion of a quasi-variational inequality. Recently, numerous authors have contributed to strengthening the theory and applicability of quasi-variational inequalities.
In the following, we provide a brief review of some of the recent developments in this direction.
Khan-Motreanu~\cite{khan1} gave new existence results for elliptic and evolutionary variational and quasi-variational inequalities by using Mosco-type continuity properties and a fixed point theorem for set-valued maps. Liu-Zeng~\cite{liu3} studied optimal control of generalized quasi-variational hemivariational inequalities involving multivalued mapping. Liu-Motreanu-Zeng~\cite{liu4} investigated a notion of well-posedness for differential mixed quasi-variational inequalities in Hilbert spaces. Aussel-Sultana-Vetrivel~\cite{aussel} established existence results for the projected solution of quasi-variational inequalities in a finite-dimensional setting. Khan-Tammer-Zalinescu~\cite{khan2} employed the elliptic regularization technique to study an ill-posed quasi-variational inequality with contaminated data, and showed that a sequence of bounded regularized solutions converges strongly to a solution of the original quasi-variational inequality.

In the present paper,
our first goal is to study existence of solution to the nonlinear mixed quasi-variational inequality in the following functional framework.
Let $V$ be a real reflexive Banach space
and $C$ be a nonempty, closed, and convex subset of $V$. Let $B$ be another Banach space, and  $A\subseteq B$ be the set of admissible parameters. Given a set-valued map
$K\colon C\to 2^C$, a nonlinear map
$T\colon B\times V\to V^*$,
a functional $\varphi\colon V\to \overline{\mathbb{R}}
:=\mathbb{R}\cup\{+\infty\}$, and $m\in V^*$, the problem reads as follows: find $u\in C$ such that $u\in K(u)$ and
\begin{equation}\label{NEW1}
\langle T(a,u),v-u\rangle +\varphi(v)-\varphi(u)\ge \langle m,v-u\rangle
\ \ \mbox{for all} \ \ v\in K(u).
\end{equation}
The second goal of this paper is to investigate  the inverse problem of identifying a variable parameter $a$ from the measured data $z$ such that a solution $u(a)$ of the mixed quasi-variational inequality
(\ref{NEW1})
is closest to the data $z$ in some norm.
We will study this inverse problem in an optimization framework which is most suitable for incorporating a regularization process. We note that the regularization is necessary due to the severely ill-posed nature of the inverse problem.

We note that today the inverse problem of parameter identification in partial differential equations is an important and mature subject. However, in recent years, motivated by various applications, inverse problems in variational and quasi-variational inequalities have attracted a lot of attention. To mention a few of the recent contributions, we note that Gwinner-Jadamba-Khan-Sama~\cite{gwinner1} examined the inverse problem in an optimization setting using the output-least squares formulation, and obtained existence as well as convergence results for the optimization problem. Clason-Khan-Sama-Tammer~\cite{khan3} explored the inverse problem of parameter identification in noncoercive variational problems via the output least-squares and the modified output least-squares objectives.  Gwinner~\cite{gwinner2} focused on the inverse problem of parameter identification in variational inequalities of the second kind that does not only treat the parameter linked to a bilinear form but importantly also the parameter linked to a nonlinear non-smooth function. For more details on this topic, the reader is referred to Alleche-R$\breve{\mbox{a}}$dulescu~\cite{radulescu3,radulescu2,radulescu1}, Aussel-Gupta-Mehra~\cite{aussel2}, Gwinner~\cite{gwinner3}, Khan-Mig\'orski-Sama~\cite{KMS}, Khan-Sama~\cite{khan4},  Kassay-R\u{a}dulescu~\cite{KassayRadulescu2018},  and the references therein.

Quite recently, Khan-Motreanu~\cite{khanmotreanu} studied the following inverse problem of parameter identification driven by a quasi-variational inequality: find $a\in A$ by solving
\begin{equation}\label{KMproblem}
\min_{a\in A}J_\kappa(a):=\frac{1}{2}\|u(a)-z\|_Z^2
+\kappa R(a),
\end{equation}
where $\kappa>0$ is a regularization parameter, $R$ is a regularization operator,
$z\in Z$ is the measured data,
and $u(a)$ is the unique solution to the following quasi-variational inequality associated with the parameter $a$: find $u(a)\in K(u(a))$ such that
\begin{equation}\label{KMproblem2}
T(a,u,v-u)\ge \langle m,v-u\rangle
\ \ \mbox{for all}\ \ v\in K(u(a)).
\end{equation}
The functional setup of~\cite{khanmotreanu} is described as follows:
$T\colon B\times V\times V\to \mathbb R$
is a trilinear bounded function such that
$T(a,u,v)$ is symmetric in $u$ and $v$,
and
there exist constants $\alpha>0$ and $\beta>0$ satisfying
\begin{eqnarray*}
&&T(a,u,v)\le \beta\|a\|_B\|u\|_V\|v\|_V
\ \ \mbox{for all}\ \ u, v\in V,\ a \in B,
\\[2mm]
&&T(a,u,u)\ge \alpha\|u\|_V^2
\ \ \mbox{for all}\ \ u \in V,\ a \in A.
\end{eqnarray*}
The authors in~\cite{khanmotreanu} first proved the existence of a global minimizer and gave
convergence results for the optimization problem  (\ref{KMproblem}).
Next, they discretized the identification problem and provided the convergence analysis for
the discrete problem. Finally, an application to the gradient obstacle problem was given.

Here, we have to point out that under the assumption that $T$ is trilinear and strongly monotone, it is easy to prove the quasi-variational inequality (\ref{KMproblem2}) has a solution. However, there are a lot of identification problems driven by mixed quasi-variational inequalities (i.e., quasi-variational inequalities which involve convex and lower semicontinuous functionals as in (\ref{NEW1})) and not by a quasi-variational inequality.
Furthermore, if the solution set for a mixed quasi-variational inequality corresponding to an identification problem is not a singleton, then the difficulty of research increases since we need to analyze various properties of the solution mapping (which is a set-valued map), such as continuity, etc. The present paper deals with a generalized and complicated identification problem under the more general functional framework.
More precisely, the paper is built on~\cite{khanmotreanu} and extends results given there in the following three directions.
First, we consider the quasi-variational inequality for a nonlinear and not necessarily strongly monotone map whereas the framework developed in~\cite{khanmotreanu} is limited to a bilinear elliptic form. Second, we conduct the study in a Banach space setting, not in a Hilbert space.
The adopted generality allows applying our results to an implicit obstacle problem involving the operator of $p$-Laplace type. Third, the inclusion of the functional $\varphi$ enhances the applicability of our abstract framework.

The paper is organized as follows.
In Section~\ref{Section2}, we provide the primary results of the paper. They include an existence result for the nonlinear quasi-variational inequality and an existence result for inverse problem. Section~\ref{Section3} provides
an application of the results to an implicit obstacle problem of $p$-Laplacian type.

\section{Main Results}\label{Section2}
In this section, we explore the properties of the solution set for the quasi-variational inequality under consideration and provide an existence result for the inverse problem.

\subsection{Solvability of the Nonlinear Quasi-Variational Inequality}

Let $V$ be a reflexive Banach space equipped with the norm $\|\cdot\|_V$ and
$\langle \cdot, \cdot \rangle$ be the duality pairing between $V$ and its dual $V^*.$
The weak and the norm convergences are denoted by $\rightharpoonup$ and $\to$, respectively.
Let $C$ be a nonempty, closed and convex subset of $V$. Let $B$ denote another Banach space and
$A$ be a subset of $B$.

We consider the following nonlinear mixed quasi-variational inequality (MQVI):
given $a\in A$,
find $u\in C$ with $u\in K(u)$ such that
\begin{equation}\label{mqvi}
\langle T(a,u),v-u\rangle +\varphi(v)-\varphi(u)\ge \langle m,v-u\rangle
\ \ \mbox{for all} \ \ v\in K(u).
\end{equation}

We introdude the following hypotheses on the data of inequality (\ref{mqvi}).
\begin{itemize}
\item[($\mathcal{H}_T$).]
The mapping
$T\colon B\times V\to V^* $ is bounded and such that
\begin{itemize}
\item[(i)] For each $u\in V$, the mapping
$T(\cdot, u) \colon B \to V^*$ is linear.
\item[(ii)] For each $a\in A$, the mapping $T(a,\cdot)\colon V\to V^*$ is monotone and continuous.
\end{itemize}
\item[($\mathcal{H}_{\varphi}$).]
The functional $\varphi\colon V\to \overline{\mathbb{R}}:=\mathbb{R}\cup\{+\infty\}$ is proper, convex, and lower semicontinuous with  $C\subset \mbox{int(dom}(\varphi))$.
\item[($\mathcal{H}_K$).]
The set-valued mapping $K\colon C\to 2^C$ is such that for all $u\in C$, the set $K(u)\subseteq C$ is nonempty, closed, convex, and
\begin{itemize}
\item[(i)] For any sequence $\{x_n\}\subset C$ with $x_n\rightharpoonup x$, and for any $y\in K(x),$ there exists a sequence $\{y_n\}\subset C$ such that $y_n\in K(x_n)$ and $y_n\to y,$ as $n\to\infty.$
\item[(ii)] For any sequences $\{x_n\}$ and $\{y_n\}$ in $C$ with $y_n\in K(x_n)$, if $x_n\rightharpoonup x$ and $y_n\rightharpoonup y$, then $y\in K(x)$.
\end{itemize}
\item[($\mathcal{H}_0$).] There is a bounded subset $C_0$ of $V$ with $K(u)\cap C_0\neq \emptyset,$ for each $u\in C.$ Furthermore, there exists a function $h:\mathbb{R}_+\to \mathbb{R}$ with $h(t)\to +\infty$ as $t\to +\infty$ such that
\begin{equation}\label{coercive}
\langle T(a,w),w-v_0\rangle
+\varphi(u)-\varphi(v_0)
\ge h(\|w\|_V)\, \|w\|_V
\ \,\mbox{\rm for all}
\ v_0 \in C_0,\, a\in A,\, w\in C.
\end{equation}
\end{itemize}

\begin{remark}
\rm
Note that the coercivity hypothesis $(\mathcal{H}_0)$ is uniform with respect to $a\in A$.
Evidently, if $0\in K(u)$ for all $u\in C$, and $T(a,w)=T(w)$, then condition \eqref{coercive} reduces to the classical coercivity condition.  Assumption ($\mathcal{H}_K$) usually means
that the set-valued mapping $K\colon C\to 2^C$
is $M$-continuous, see Definition~\ref{defap}.
\end{remark}

We recall the following fixed point theorem by Kluge~\cite{Kluge}.
\begin{theorem}\label{fpt}
Let $Z$ be a reflexive Banach space and
$C\subset Z$ be nonempty, closed and convex. Assume that $\Psi\colon C\to 2^C$ is a set-valued mapping such that for every $u\in C$, the set $\Psi(u)$ is nonempty, closed, and convex, and the graph of $\Psi$ is sequentially weakly closed. If either $C$ is bounded or $\Psi(C)$ is bounded, then the map $\Psi$ has at least one fixed point in $C$.
\end{theorem}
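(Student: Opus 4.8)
The plan is to recast the statement as an application of the Kakutani--Fan--Glicksberg fixed point theorem for set-valued maps on a compact convex set in a locally convex Hausdorff space, where the operative topology is the weak topology of $Z$.

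First I would reduce to the case that $C$ itself is bounded. If instead it is $\Psi(C)$ that is bounded, I set $D:=\overline{\operatorname{conv}}\,\Psi(C)$. Since $C$ is closed and convex and $\Psi(C)\subseteq C$, we get $D\subseteq C$; moreover $D$ is bounded, closed, and convex, and $\Psi(D)\subseteq\Psi(C)\subseteq D$. Thus $\Psi$ restricts to a map $D\to 2^{D}$ retaining all the structural hypotheses (nonempty closed convex values and a sequentially weakly closed graph, the latter because the graph of the restriction is the intersection of $\operatorname{Gr}(\Psi)$ with the weakly closed set $D\times Z$), and a fixed point in $D$ is a fixed point in $C$. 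Hence from now on I may assume $C$ is bounded, closed, and convex.

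Next I would install the weak topology on $Z$, under which $Z$ is a locally convex Hausdorff topological vector space and, by reflexivity together with the Banach--Alaoglu theorem, the bounded, closed, convex set $C$ is weakly compact. By Mazur's theorem each value $\Psi(u)$, being closed and convex, is weakly closed, hence a nonempty convex weakly compact subset of $C$. So the values are precisely as demanded by the Kakutani--Fan--Glicksberg theorem. The crux is the graph condition: the theorem requires the graph of $\Psi$ to be closed in the product weak topology of $C\times C$ (equivalently, $\Psi$ to be weakly upper semicontinuous into the compact space $C$), whereas we are only handed sequential weak closedness. Bridging this gap is the main obstacle, since the weak topology on $C$ need not be first countable when $Z$ is non-separable, so sequential closedness is a priori strictly weaker than closedness.

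I would resolve this through the Eberlein--\v{S}mulian theorem, applied in the reflexive product space $Z\times Z$ whose weak topology is the product of the weak topologies of the factors: because $C\times C$ is weakly compact, the graph $\operatorname{Gr}(\Psi)\subseteq C\times C$ is relatively weakly compact, and therefore every point of its weak closure is the weak limit of a sequence drawn from $\operatorname{Gr}(\Psi)$. Combined with the assumed sequential weak closedness, this forces $\operatorname{Gr}(\Psi)$ to be weakly closed, and a closed graph into the weakly compact space $C$ yields weak upper semicontinuity of $\Psi$. With $(Z,\text{weak})$ locally convex Hausdorff, $C$ nonempty compact convex, and $\Psi\colon C\to 2^{C}$ upper semicontinuous with nonempty closed convex values, the Kakutani--Fan--Glicksberg theorem produces $u\in C$ with $u\in\Psi(u)$, the desired fixed point. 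The one delicate step is the sequential-to-topological upgrade of the graph, and it is exactly there that reflexivity, via Eberlein--\v{S}mulian, is indispensable.
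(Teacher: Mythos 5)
Your proof is correct, but there is nothing in the paper to compare it against: the paper does not prove this statement at all --- it is Kluge's fixed point theorem, recalled from the reference \cite{Kluge} and used as a black box. Your argument is a legitimate, self-contained derivation from standard machinery, and each of its three pillars checks out. The reduction to bounded $C$ via $D=\overline{\operatorname{conv}}\,\Psi(C)$ is sound: $D$ is nonempty, bounded, closed, convex, contained in $C$, satisfies $\Psi(D)\subseteq\Psi(C)\subseteq D$, and sequential weak closedness of the restricted graph survives because $D$, being closed and convex, is weakly sequentially closed. The delicate step is exactly the one you flag: sequential weak closedness of the graph is a priori weaker than weak closedness, and the correct tool is not the bare Eberlein--\v{S}mulian compactness equivalences but the accompanying \emph{angelicity} statement --- every point of the weak closure of a relatively weakly compact subset of a Banach space is the weak limit of a sequence from that subset --- which you invoke properly; your auxiliary observations are also right, namely that $(Z\times Z)^*=Z^*\times Z^*$ so the weak topology of the reflexive product $Z\times Z$ is the product of the weak topologies, and that weak convergence of a sequence in $Z\times Z$ is componentwise, so the paper's formulation of sequential closedness matches yours. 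Finally, a set-valued map with closed graph into a compact Hausdorff codomain is upper semicontinuous, and the Kakutani--Fan--Glicksberg theorem applies since $(Z,\mathrm{weak})$ is locally convex Hausdorff, $C$ is weakly compact convex (reflexivity plus Mazur), and the values, as closed convex subsets of $C$, are weakly compact and convex. What your route buys is transparency: it exhibits the theorem the paper imports from the 1978 source as a consequence of Eberlein--\v{S}mulian and Fan--Glicksberg, making clear that reflexivity enters twice --- once for weak compactness of $C$, and once, indispensably, for the sequential-to-topological upgrade of the graph.
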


The following Minty-type lemma represents an equivalent formulation of (MQVI).
\begin{lemma}\label{Minty}
Assume that $(\mathcal{H}_T)$ and $(\mathcal{H}_{\varphi})$ hold,
and $m \in V^*$. If $K\colon C\to 2^C$ is such that for each $u\in C$, $K(u)\subseteq C$ is nonempty, closed, and convex,  then (MQVI) is equivalent to the following inequality: given $a\in A$,
find $u\in C$ such that $u\in K(u)$ and
\begin{equation}\label{mmqvi}
\langle T(a,v),v-u\rangle +\varphi(v)-\varphi(u)\ge \langle m,v-u\rangle
\ \ \textrm{\rm for all}\ \ v\in K(u).\end{equation}
\end{lemma}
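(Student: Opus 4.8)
The plan is to prove the equivalence by establishing the two implications separately. Both formulations carry the same constraint $u\in K(u)$, so in each direction I fix a solution of one problem and show it solves the other. A preliminary observation I would record at the outset is that, since $(\mathcal{H}_{\varphi})$ assumes $C\subset\operatorname{int}(\operatorname{dom}(\varphi))$ and $K(u)\subseteq C$, the values $\varphi(u)$ and $\varphi(v)$ are finite for every $v\in K(u)$; this guarantees that all the $\varphi$-terms below are genuine real numbers and rules out any $\infty-\infty$ ambiguity when they are manipulated.

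For the implication (\ref{mqvi}) $\Rightarrow$ (\ref{mmqvi}), suppose $u$ solves (MQVI). The only tool needed is the monotonicity of $T(a,\cdot)$ from $(\mathcal{H}_T)$(ii): for any $v\in K(u)$ one has $\langle T(a,v)-T(a,u),v-u\rangle\ge 0$, hence $\langle T(a,v),v-u\rangle\ge\langle T(a,u),v-u\rangle$. Adding $\varphi(v)-\varphi(u)$ to both sides and invoking (\ref{mqvi}) immediately yields (\ref{mmqvi}). This direction is routine and uses monotonicity alone, not continuity.

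The reverse implication (\ref{mmqvi}) $\Rightarrow$ (\ref{mqvi}) is the substantive part and proceeds by the classical Minty argument. Fix a solution $u$ of (mmqvi) and an arbitrary $w\in K(u)$. Since $u\in K(u)$, $w\in K(u)$, and $K(u)$ is convex, the segment point $v_t:=(1-t)u+tw=u+t(w-u)$ lies in $K(u)$ for every $t\in(0,1)$. Substituting $v=v_t$ into (\ref{mmqvi}) and using $v_t-u=t(w-u)$ gives
\[
t\,\langle T(a,v_t),w-u\rangle+\varphi(v_t)-\varphi(u)\ge t\,\langle m,w-u\rangle .
\]
Convexity of $\varphi$ yields $\varphi(v_t)-\varphi(u)\le t\,(\varphi(w)-\varphi(u))$, so replacing the $\varphi$-difference by this larger bound only strengthens the left-hand side and produces
\[
t\,\langle T(a,v_t),w-u\rangle+t\,(\varphi(w)-\varphi(u))\ge t\,\langle m,w-u\rangle .
\]
Dividing by $t>0$ and letting $t\to 0^+$, so that $v_t\to u$ in $V$, the continuity of $T(a,\cdot)$ from $(\mathcal{H}_T)$(ii) forces $\langle T(a,v_t),w-u\rangle\to\langle T(a,u),w-u\rangle$, and I obtain $\langle T(a,u),w-u\rangle+\varphi(w)-\varphi(u)\ge\langle m,w-u\rangle$. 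As $w\in K(u)$ was arbitrary, $u$ solves (MQVI).

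The main obstacle is confined entirely to this second implication, and specifically to the limit passage $t\to 0^+$: it is exactly here that the continuity of $T(a,\cdot)$ is indispensable (hemicontinuity along the segment $v_t$ would in fact suffice), while the convexity of $K(u)$ and of $\varphi$ is what legitimizes the segment construction and the bound on $\varphi(v_t)$. I would take care to isolate the finiteness of $\varphi$ on $K(u)$ as the very first step, since the whole manipulation of the $\varphi$-terms silently depends on it.
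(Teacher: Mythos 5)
Your proof is correct and follows essentially the same route as the paper: monotonicity of $T(a,\cdot)$ for the forward implication, and the Minty segment argument $v_t=u+t(w-u)\in K(u)$ with convexity of $\varphi$ and (hemi)continuity of $T(a,\cdot)$ as $t\to 0^+$ for the converse. Your added remark that $C\subset\operatorname{int}(\operatorname{dom}(\varphi))$ makes all $\varphi$-terms finite is a sound clarification the paper leaves implicit.
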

\begin{proof}
Let $u\in C$ be a solution to inequality (MQVI). Then $u\in K(u)$ and
\begin{eqnarray*}
\langle T(a,u),v-u\rangle +\varphi(v)-\varphi(u)\ge \langle m,v-u\rangle
\ \  \textrm{\rm for all}\ \ v\in K(u).
\end{eqnarray*}
Using the monotonicity of $T(a,\cdot)$,
we immediately get that $u$ is also a solution
to (\ref{mmqvi}).

Conversely, let $u\in C$ be a solution to  (\ref{mmqvi}). Since $K(u)$ is convex, for all  $w\in K(u)$ and $\lambda\in(0,1)$, we take
$v= u_\lambda:=\lambda w+(1-\lambda)u\in K(u)$
in (\ref{mmqvi}) to get
\begin{equation*}
\langle T(a,u_\lambda),w-u\rangle +\varphi(w)-\varphi(u)\ge \langle m,w-u\rangle.
\end{equation*}
The hemicontinuity of the mapping $T(a,\cdot)$ confirms that $u\in K(u)$ is a solution to (MQVI). \end{proof}

In what follows, the solution set to inequality (MQVI) corresponding to a parameter $a$
is denoted by $\Gamma(a)$.
The main result of this subsection is the following existence result.
\begin{theorem}\label{maintheorem1}
Assume that $(\mathcal{H}_T)$, $(\mathcal{H}_{\varphi})$, $(\mathcal{H}_K)$, and $(\mathcal{H}_0)$ hold, and $m \in V^*$.
Then for each $a\in A$ fixed, the set $\Gamma(a)$ is nonempty, bounded, and weakly closed. \end{theorem}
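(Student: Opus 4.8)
The plan is to recast the quasi-variational inequality as a fixed point problem for the solution map of a family of ``frozen'' mixed variational inequalities, and then to invoke Kluge's fixed point theorem (Theorem~\ref{fpt}). Fix $a\in A$. For each $w\in C$ freeze the constraint set and set
\begin{equation*}
\mathcal{S}(w):=\{\, u\in K(w) : \langle T(a,u),v-u\rangle+\varphi(v)-\varphi(u)\ge\langle m,v-u\rangle \ \text{for all}\ v\in K(w)\,\}.
\end{equation*}
A point $u$ satisfies $u\in\mathcal{S}(u)$ precisely when $u\in\Gamma(a)$, so it suffices to produce a fixed point of $\mathcal{S}$ and then to read off the stated properties from the representation $\Gamma(a)=\{u : u\in\mathcal{S}(u)\}$.

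First I would check that $\mathcal{S}$ has nonempty, closed, and convex values. Nonemptiness is just the solvability of a standard monotone mixed variational inequality on the fixed closed convex set $K(w)$: since $T(a,\cdot)$ is monotone and hemicontinuous, $\varphi$ is proper, convex and lower semicontinuous, and $(\mathcal{H}_0)$ supplies an anchor $v_0\in K(w)\cap C_0$ for which $\langle T(a,u),u-v_0\rangle+\varphi(u)-\varphi(v_0)\ge h(\|u\|_V)\|u\|_V\to+\infty$, the classical existence theory applies. For closedness and convexity I would pass to the equivalent Minty formulation of Lemma~\ref{Minty}: $u\in\mathcal{S}(w)$ iff $u\in K(w)$ and $\langle T(a,v),v-u\rangle+\varphi(v)-\varphi(u)\ge\langle m,v-u\rangle$ for all $v\in K(w)$. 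For each fixed $v$ the latter inequality is concave in $u$ (an affine term plus the concave term $-\varphi(u)$), so its solution set is convex and, using weak lower semicontinuity of $\varphi$ together with weak closedness of $K(w)$, weakly closed; intersecting over $v\in K(w)$ shows $\mathcal{S}(w)$ is convex and (weakly, hence strongly) closed.

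The crux is the sequential weak closedness of the graph of $\mathcal{S}$, which is exactly where $(\mathcal{H}_K)$ enters and is the main obstacle. Suppose $w_n\rightharpoonup w$, $u_n\in\mathcal{S}(w_n)$, and $u_n\rightharpoonup u$. From $u_n\in K(w_n)$ and $(\mathcal{H}_K)(ii)$ I get $u\in K(w)$. To verify the Minty inequality for $u$ on $K(w)$, fix $y\in K(w)$ and use $(\mathcal{H}_K)(i)$ to obtain $y_n\in K(w_n)$ with $y_n\to y$; testing the Minty inequality for $u_n$ with $v=y_n\in K(w_n)$ gives $\langle T(a,y_n),y_n-u_n\rangle+\varphi(y_n)-\varphi(u_n)\ge\langle m,y_n-u_n\rangle$. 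Passing to the limit then combines: the continuity of $T(a,\cdot)$, so that $T(a,y_n)\to T(a,y)$ in $V^*$ and hence $\langle T(a,y_n),y_n-u_n\rangle\to\langle T(a,y),y-u\rangle$ by the strong$\times$weak pairing; the continuity of $\varphi$ on $\mathrm{int}(\mathrm{dom}(\varphi))\supset C$ (a property of proper convex lsc functions guaranteed here by $(\mathcal{H}_\varphi)$), so that $\varphi(y_n)\to\varphi(y)$; and the weak lower semicontinuity of $\varphi$ to secure $\liminf\varphi(u_n)\ge\varphi(u)$. Taking $\limsup$ yields $\langle T(a,y),y-u\rangle+\varphi(y)-\varphi(u)\ge\langle m,y-u\rangle$ for every $y\in K(w)$, so $u\in\mathcal{S}(w)$ by Lemma~\ref{Minty}. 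It is precisely the Minty reformulation, which eliminates $T$ evaluated at the weak limit of $u_n$, that makes this passage feasible while simultaneously reconciling the moving constraint sets $K(w_n)$ with the weak convergence $u_n\rightharpoonup u$.

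Finally, $(\mathcal{H}_0)$ also yields a uniform bound: for any $u\in\mathcal{S}(w)$, testing the defining inequality with an anchor $v_0\in K(w)\cap C_0$ and combining with \eqref{coercive} gives $h(\|u\|_V)\|u\|_V\le\|m\|_{V^*}(\|u\|_V+M_0)$ with $M_0:=\sup_{v_0\in C_0}\|v_0\|_V$, which forces $\|u\|_V$ below a constant depending only on $m$, $C_0$, and $h$; hence $\mathcal{S}(C)$ is bounded. Kluge's theorem (Theorem~\ref{fpt}) then applies to $\mathcal{S}$ and produces a fixed point, so $\Gamma(a)\neq\emptyset$. The same uniform bound, applied to $u\in\Gamma(a)$ with $w=u$, shows $\Gamma(a)$ is bounded, and the weak closedness of $\Gamma(a)$ is immediate from the weak closedness of the graph of $\mathcal{S}$: if $\Gamma(a)\ni u_n\rightharpoonup u$, then $u_n\in\mathcal{S}(u_n)$ with both the argument and the solution converging weakly to $u$, whence $u\in\mathcal{S}(u)=\Gamma(a)$.
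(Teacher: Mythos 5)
Your proposal is correct and follows essentially the same route as the paper: both define the variational selection $S_a(w)$ on the frozen constraint set $K(w)$, establish its nonempty closed convex values, prove sequential weak closedness of its graph via the Minty reformulation together with $(\mathcal{H}_K)$ and the continuity of $\varphi$ on $C$, obtain boundedness of $S_a(C)$ from the uniform coercivity $(\mathcal{H}_0)$, and conclude with Kluge's fixed point theorem. The only (harmless) differences are that you verify the frozen inequality's solvability and the value properties directly where the paper cites an external result, and you neatly deduce the weak closedness of $\Gamma(a)$ from the graph closedness of $S_a$ with $w_n=u_n$, whereas the paper re-runs that limit argument explicitly.
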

\begin{proof}
We will first show that
$\Gamma(a)\ne \emptyset$ for all $a \in A$.	
To this goal, we exploit the commonly used technique of finding a fixed point of the variational selection.
Let $a\in A$ be arbitrary but fixed parameter.
For a fixed $w\in C$, we consider the variational inequality: find $u\in K(w)$ such that
\begin{equation}\label{mvi}
\langle T(a,u),v-u\rangle+\varphi(v)-\varphi(u)\ge \langle m, v-u\rangle
\ \ \textrm{for all}\ \ v\in K(w).\end{equation}
Define the variational selection
$S_a\colon C\to 2^C$ that to any $w\in C$ associates the set of solutions to inequality (\ref{mvi}), namely,
\begin{eqnarray*}
S_a(w):=\big\{u\in K(w) \mid
\mbox{$u$ solves the problem (\ref{mvi})}\big\}.
\end{eqnarray*}
It is clear that any fixed point of set-valued mapping $S_a$
is a solution to inequality (MQVI). The proof that
$\Gamma(a)\ne \emptyset$ for all $a \in A$ is based on showing that the variational selection $S_a$ satisfies the assumptions imposed on the map $\Phi$ in Theorem~\ref{fpt}.

First, using (\ref{coercive}) and hypotheses $(\mathcal{H}_{\varphi})$,
$(\mathcal{H}_0)$,
it follows from \cite[Theorem 3.2]{liuzengmigorski} that for every
$a\in A$ and $w\in C,$ the set $S_a(w)$ is nonempty, closed, and convex.

Second, we claim that for any $a\in A$,
the graph of $S_a$ is sequentially weakly closed. Let $\{w_n\}$,
$\{u_n\}$ be sequences such that
$u_n\in S_a(w_n)$ with
$u_n\rightharpoonup u$ and
$w_n\rightharpoonup w$, as $n\to \infty$. Then $u_n\in K(w_n)$ and using Lemma~\ref{Minty}, we have
\begin{equation}\label{eq1}
\langle T(a,v),v-u_n\rangle+\varphi(v)-\varphi(u_n)\ge \langle m,v-u_n\rangle
\ \ \textrm{for all}\ \ v\in K(w_n).
\end{equation}
Note that $(w_n,u_n)\in\mbox{graph}(K)$ with $(w_n,u_n)\rightharpoonup(w,u)$ in $C\times C$,  so by hypothesis $(\mathcal{H}_K)$\,(ii),
we have $u\in K(w)$.
On the other hand, for any $z\in K(w),$  by condition $(\mathcal{H}_K)$\,(i),
there exists a sequence $\{v_n\}\subset C$
with $v_n\in K(w_n)$ for all $n\in \mathbb N$ such that $v_n\to z$, as $n\to \infty$.
Recall that $C\subset \mbox{int(dom}\varphi))$, so by invoking~\cite[Proposition 2.2]{barbu}, the function $\varphi$ is continuous on $C$.  Inserting $v=v_n$ into (\ref{eq1}), and passing to the upper limit, as $n\to \infty$, we obtain
\begin{align*}
\langle T(a,z),z-u\rangle+\varphi(z)-\varphi(u)&\ge\limsup_{n\to\infty}\langle T(a,v_n)-T(a,z),v_n-u_n\rangle+\limsup_{n\to\infty}\langle T(a,z),v_n-u_n\rangle\\
&\quad+\limsup_{n\to\infty}\varphi(v_n)-\liminf_{n\to\infty}\varphi(u_n)\\
&\ge\limsup_{n\to\infty}\langle T(a,v_n),v_n-u_n\rangle+\limsup_{n\to\infty}\varphi(v_n)-\liminf_{n\to\infty}\varphi(u_n)\\
&\ge \limsup_{n\to\infty}\bigg(\langle T(a,v_n),v_n-u_n\rangle+\varphi(v_n)-\varphi(u_n)
\bigg)\\
&\ge \limsup_{n\to\infty}\langle m,v_n-u_n\rangle=\langle m,z-u\rangle
\ \ \textrm{for all}\ \ z\in K(w).
\end{align*}
By Lemma~\ref{Minty}, it follows that
$u\in S_a(w)$, which implies that the graph of  $S_a$ is sequentially weakly closed.

Third, we claim that the set $S_a(C)$ is bounded. Arguing by contradiction, suppose that $S_a(C)$ is unbounded, and there are sequences
$\{w_n\}$ and $\{u_n\}$ such that
$u_n\in S_a(w_n)$ and $\|u_n\|_V\to\infty,$ as $n\to\infty$.
Therefore, $u_n\in K(w_n)$ and
\begin{equation*}
\langle T(a,u_n),v-u_n\rangle+\varphi(v)-\varphi(u_n)\ge \langle m,v-u_n\rangle
\ \ \textrm{for all}\ \ v\in K(w_n).
\end{equation*}
By hypothesis $(\mathcal{H}_0)$, there is a sequence $\{v_n\}\subset C$ such that
$v_n\in C_0\cap K(w_n),$ for each $n\in\mathbb N$.
We set $v=v_n$ in the above inequality and rearrange the resulting inequality to obtain
$$
\langle m,u_n-v_n\rangle\geq \langle T(a,u_n),u_n-v_n\rangle+\varphi(u_n)-\varphi(v_n)
\geq h(\|u_n\|_V)\|u_n\|_V,
$$
where $h\colon \mathbb{R}_+\to \mathbb{R}$ with $h(r)\to +\infty$, as $r\to+\infty.$ The above inequality implies that
\begin{equation*}
h(\|u_n\|_V)\leq \|m\|_{V^*}
\Big(1+\frac{\|v_n\|_V}{\|u_n\|_V}\Big),
\end{equation*}
and by passing to the limit as $n\to \infty$,
we get a contradiction.
Hence $S_a(C)$ is bounded set.

Therefore, all the conditions of Theorem~\ref{fpt} have been verified for the set-valued mapping
$S_a$
and hence it has a fixed point.
Consequently, for each $a\in A$, we have $\Gamma(a)\ne \emptyset$.
Note that since $\Gamma(a)\subset S_a(C)$,
for all $a\in A$, the set $\Gamma(a)$ is bounded as well.

Finally, it remains to prove that
for each $a\in A$, the set $\Gamma(a)$
is weakly closed.
Let $\{u_n\}\subset \Gamma(a)$ be such that $u_n\rightharpoonup u$ in $C$, as $n\to \infty$. Then, for each $n\in\mathbb N$, by Lemma~\ref{Minty}, we get
$u_n\in K(u_n)$ and
\begin{equation*}
\langle T(a,v),v-u_n\rangle+\varphi(v)-\varphi(u_n)\ge \langle m,v-v_n\rangle
\ \ \textrm{for all}\ \ v\in K(u_n).
\end{equation*}
It follows from hypothesis $(\mathcal{H}_{K})$(ii) and convergence $u_n\rightharpoonup u$ with $u_n\in K(u_n)$ that $u\in K(u)$.
Moreover, for any $z\in K(u)$, using
$(\mathcal{H}_{K})$(i),
there exists a sequence $\{v_n\}\subset C$ such that
$v_n\in K(u_n)$ and $v_n\to z,$ as $n\to\infty.$ Therefore, we have
\begin{equation*}
\langle T(a,v_n),v_n-u_n\rangle+\varphi(v_n)-\varphi(u_n)
\ge \langle m,v_n-v_n\rangle.
\end{equation*}
By the continuity of $\varphi$,
see $(\mathcal{H}_{\varphi})$, and passing to the upper limit, as $n\to \infty$ in the above inequality, we obtain $u\in \Gamma(a)$ proving that $\Gamma(a)$ is weakly closed for each
$a\in A$. The proof of the theorem is complete.
\end{proof}

As a consequence of Theorem~\ref{maintheorem1}, we deduce
the following corollary which is a recent result of~\cite[Theorem 2.2]{khanmotreanu}.
\begin{corollary}\label{corollary}
Let $V$ be a Hilbert space, $m \in V^*$ and
$(\mathcal{H}_K)$ hold.
Assume that there is a bounded subset $C_0$ of $V$ with $K(u)\cap C_0\neq \emptyset$ for all
$u\in C$, and $T\colon B\times V\times V\to \mathbb{R}$ is a trilinear form satisfying the following continuity and coercivity conditions
\begin{align}
|T(a,u,v)|&\leq \beta\|a\|_B\|\|u\|_V\|v\|_V
\ \ \textrm{\rm for all}\ \ (a,u,v)\in B\times V\times V
\ \ \textrm{\rm with}\ \ \beta>0,\\[1mm]
T(a,v,v)&\ge \alpha\|v\|_V^2
\ \ \mbox{\rm for all}\ \ (a,v)\in A\times V
\ \ \textrm{\rm with}\ \ \alpha>0.
\end{align}
Then, for each $a\in A$, the set of solutions, $u\in K(u)$ such that
\begin{equation*}
T(a,u,v-u)\ge \langle m,v-u\rangle
\ \ \mbox{\rm for all}\ \ v\in K(u),
\end{equation*}
is nonempty, bounded, and weakly closed.
\end{corollary}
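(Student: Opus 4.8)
The plan is to realize the corollary as a special case of Theorem~\ref{maintheorem1} by recasting the trilinear form as an operator into $V^*$ and choosing $\varphi\equiv 0$. Concretely, for each fixed $a\in A$ and $u\in V$ the map $v\mapsto T(a,u,v)$ is a bounded linear functional on $V$, so by the Riesz representation theorem it is represented by a unique element of $V^*$, which I denote again by $T(a,u)$; thus $\langle T(a,u),v\rangle=T(a,u,v)$ for all $v\in V$. With $\varphi\equiv 0$ the inequality in the corollary becomes exactly (MQVI), and $(\mathcal{H}_{\varphi})$ holds trivially, since the zero functional is proper, convex, lower semicontinuous, and $C\subset V=\mbox{int(dom}(\varphi))$. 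Hypothesis $(\mathcal{H}_K)$ is assumed outright.

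Next I would verify $(\mathcal{H}_T)$. Part (i) is immediate: trilinearity of $T$ makes $a\mapsto T(a,u)$ linear from $B$ into $V^*$. For part (ii), fix $a\in A$. Bilinearity in the last two slots shows $u\mapsto T(a,u)$ is linear, and the continuity estimate gives $\|T(a,u)\|_{V^*}\le\beta\|a\|_B\|u\|_V$, so this linear operator is bounded and hence continuous. Monotonicity follows from coercivity: for $u,v\in V$,
\[
\langle T(a,u)-T(a,v),u-v\rangle=T(a,u-v,u-v)\ge\alpha\|u-v\|_V^2\ge 0.
\]

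The crux is to produce the coercivity function required by $(\mathcal{H}_0)$. The bounded set $C_0$ is given, so set $M:=\sup_{v_0\in C_0}\|v_0\|_V<\infty$. For $w\in C$, $v_0\in C_0$, and (with $\varphi\equiv 0$) I would split the bilinear form and apply the two structural bounds,
\[
\langle T(a,w),w-v_0\rangle=T(a,w,w)-T(a,w,v_0)\ge\alpha\|w\|_V^2-\beta\|a\|_B\,M\,\|w\|_V=h_a(\|w\|_V)\,\|w\|_V,
\]
with $h_a(t):=\alpha t-\beta\|a\|_B M$. Since $h_a(t)\to+\infty$ as $t\to+\infty$, condition $(\mathcal{H}_0)$ is satisfied.

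The one point to watch is that $(\mathcal{H}_0)$ asks for a single $h$ valid for all admissible parameters, whereas $h_a$ depends on $a$ through $\|a\|_B$. This is harmless here, because the corollary only claims the conclusion for each fixed $a\in A$, so it suffices to apply Theorem~\ref{maintheorem1} with the admissible set shrunk to the singleton $\{a\}$, for which $h_a$ is trivially uniform. (I also note that the $\varphi(u)$ appearing in \eqref{coercive} should read $\varphi(w)$, but this is immaterial once $\varphi\equiv 0$.) Having verified $(\mathcal{H}_T)$, $(\mathcal{H}_{\varphi})$, $(\mathcal{H}_K)$, and $(\mathcal{H}_0)$, I would invoke Theorem~\ref{maintheorem1} to conclude that $\Gamma(a)$ is nonempty, bounded, and weakly closed, which is precisely the assertion.
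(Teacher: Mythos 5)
Your proposal is correct and follows exactly the route the paper intends: the corollary is stated there as a direct consequence of Theorem~\ref{maintheorem1}, obtained by taking $\varphi\equiv 0$, identifying the trilinear form with an operator $T\colon B\times V\to V^*$, and deducing $(\mathcal{H}_T)$ and $(\mathcal{H}_0)$ from the continuity and coercivity bounds, just as you do. Your observation that $h_a$ depends on $\|a\|_B$ and that one should apply the theorem with the admissible set $\{a\}$ (since the conclusion is for each fixed $a$) is a careful touch the paper leaves implicit, but it is the same argument.
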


\begin{remark}
\emph{We note that the proofs of Theorem~\ref{maintheorem1}
and~\cite[Theorem 2.2]{khanmotreanu} are essentially based on the same
fixed point  principle, Theorem~\ref{fpt}. However, in this paper, we deal with inequality (MQVI) in which the mapping $T$ is not necessarily strongly monotone and linear, and also a convex and lower semicontinuous
function appears in the inequality.
This results in the additional difficulty that the variational selection is not a single-valued map, and some important properties obtained in~\cite{khanmotreanu} are not available.}
\end{remark}

\subsection{An Optimization Framework for the Inverse Problem. An Existence Result}
The goal of this subsection is to investigate the inverse problem of identifying a parameter in a nonlinear mixed quasi-variational inequality.

We now recast the inverse problem of parameter identification as the following regularized optimization problem: find $a\in A$ such that
\begin{equation}\label{inverseproblem}
a\in \arg\min_{b\in A}J_\kappa(b),
\end{equation}
where, for
the regularization parameter $\kappa>0$,
the cost functional $J_\kappa\colon B\to \mathbb{R}$ is defined by
\begin{equation*}
J_\kappa(a):=\min_{u\in \Gamma(a)}\frac{1}{2}\|u-z\|_Z^2+\kappa R(a).
\end{equation*}
Here, for $a\in A$, $\Gamma(a)$ represents a set of solutions to \eqref{mqvi}, $Z$ is the data space which we assume to be a real Hilbert space such that $V$ is continuously embedded in $Z$, $z\in Z$ is a given data, and $R$ is the regularization operator.

We introduce the following assumptions.
\begin{enumerate}
\item[($\mathcal{H}_1$).]
Let $L$ and $E$ be two Banach spaces. Assume that the Banach space $B$ is continuously em\-bed\-ded into $L$ and the embedding from $E$ to $L$ is compact. The set $A$, consisting of real-valued functions, is a subset of $B\cap E$, closed and bounded in $B$, and closed in $L$.
\item[($\mathcal{H}_2$).]
Let $\{a_n\}\subset A$, $a\in A$, and
$\{u_n\}$, $\{v_n\}\subset V$. If $\{a_n\}$ is bounded in $B$ and converges strongly to $a$ in $L$, $\{v_n\}$ converges strongly to $v$ in $V$, and $\{u_n\}$ is bounded in $V$, then
\begin{equation*}
\limsup_{n\to\infty}
\langle T(a_n-a,v_n),u_n\rangle \le 0.
\end{equation*}
\item[($\mathcal{H}_3$).]
$R\colon E\to \mathbb{R} $ is convex, lower-semicontinuous with respect to
$\|\cdot\|_L$ and
\begin{eqnarray*}
R(a)\ge \tau_1\|a\|_E-\tau_2
\ \ \mbox{\rm for all $a\in A$ and for some $\tau_1>0$, $\tau_2>0$}.
\end{eqnarray*}
\end{enumerate}

\begin{remark}
\emph{Hypotheses ($\mathcal{H}_1$) and ($\mathcal{H}_3$) have been used in~\cite{khanmotreanu}.
However, assumption ($\mathcal{H}_2$)
is weaker than the following one required in~\cite{khanmotreanu}:
for any sequence
$\{b_k\}\subset B$ with $b_k\to 0$ in $L$, any bounded sequence $\{u_k\}\subset V$,
and fixed $v\in V$, we have
\begin{equation*}
\langle T(b_k,u_k),v\rangle\to 0
\ \ \mbox{as}\ \ k\to\infty.
\end{equation*}
%In fact, the above inequality can imply %ours, ($\mathcal{H}_2$).
Hence, our results are also available for identification of parameters with the regularized output-least-squares problem considered in~\cite{khanmotreanu}.}
\end{remark}

We have the following existence result for the regularized optimization problem~(\ref{inverseproblem}):
\begin{theorem}\label{maintheorem2}
Assume that $(\mathcal{H}_T)$, $(\mathcal{H}_K)$, $(\mathcal{H}_{\varphi})$,  $(\mathcal{H}_0)$--$(\mathcal{H}_3)$ hold,
and $m \in V^*$.
Then, for each $\kappa>0$,
the regularized optimization problem $(\ref{inverseproblem})$ admits a solution.
\end{theorem}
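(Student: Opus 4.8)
The plan is to establish existence by the direct method of the calculus of variations applied to $J_\kappa$ over the admissible set $A$. First I would check that $J_\kappa$ is well defined and bounded below. By Theorem~\ref{maintheorem1} the solution set $\Gamma(a)$ is nonempty, bounded and weakly closed, hence weakly compact in the reflexive space $V$; since the embedding $V\hookrightarrow Z$ is continuous, $u\mapsto\frac12\|u-z\|_Z^2$ is weakly lower semicontinuous on $V$, so the inner minimum defining $J_\kappa(a)$ is attained. Combined with $\frac12\|u-z\|_Z^2\ge 0$, the coercivity estimate in $(\mathcal{H}_3)$ gives $J_\kappa(a)\ge-\kappa\tau_2$, so $\mu:=\inf_{b\in A}J_\kappa(b)>-\infty$. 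I then fix a minimizing sequence $\{a_n\}\subset A$ with $J_\kappa(a_n)\to\mu$ and choose $u_n\in\Gamma(a_n)$ realizing the inner minimum, so that $J_\kappa(a_n)=\frac12\|u_n-z\|_Z^2+\kappa R(a_n)$.

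Next I would extract limits. Since $A$ is bounded in $B$, the sequence $\{a_n\}$ is bounded in $B$; since $\{J_\kappa(a_n)\}$ is bounded, $(\mathcal{H}_3)$ forces $\{a_n\}$ to be bounded in $E$, and the compact embedding $E\hookrightarrow L$ produces a subsequence (not relabelled) with $a_n\to a^*$ strongly in $L$. As $A$ is closed in $L$, we have $a^*\in A$. Because the coercivity $(\mathcal{H}_0)$ is uniform in $a$ — the bound on $\|u_n\|_V$ in the proof of Theorem~\ref{maintheorem1} depends only on $\|m\|_{V^*}$ and the fixed set $C_0$ — the sequence $\{u_n\}$ is bounded in $V$, so along a further subsequence $u_n\rightharpoonup u^*$ in $V$, with $u^*\in C$ since $C$ is weakly closed.

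The heart of the argument, and the step I expect to be the main obstacle, is to show $u^*\in\Gamma(a^*)$: this closedness of the solution map is precisely where the nonlinearity of $T$ and its dependence on the parameter must be reconciled through $(\mathcal{H}_2)$. From $u_n\in K(u_n)$, $u_n\rightharpoonup u^*$ and $(\mathcal{H}_K)$(ii) I obtain $u^*\in K(u^*)$. Fixing $w\in K(u^*)$, hypothesis $(\mathcal{H}_K)$(i) supplies $w_n\in K(u_n)$ with $w_n\to w$ in $V$; inserting $v=w_n$ into the Minty inequality (\ref{mmqvi}) written for $u_n$ yields $\langle T(a_n,w_n),w_n-u_n\rangle+\varphi(w_n)-\varphi(u_n)\ge\langle m,w_n-u_n\rangle$. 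Exploiting the linearity of $T(\cdot,w_n)$ from $(\mathcal{H}_T)$(i), I split $\langle T(a_n,w_n),w_n-u_n\rangle=\langle T(a^*,w_n),w_n-u_n\rangle+\langle T(a_n-a^*,w_n),w_n-u_n\rangle$. The first term converges to $\langle T(a^*,w),w-u^*\rangle$, because $T(a^*,\cdot)$ is continuous so $T(a^*,w_n)\to T(a^*,w)$ in $V^*$, paired against the weakly convergent $w_n-u_n\rightharpoonup w-u^*$; the second term has nonpositive $\limsup$ by $(\mathcal{H}_2)$, applied with the strongly convergent $w_n\to w$ and the bounded sequence $w_n-u_n$. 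Combining these with the continuity of $\varphi$ on $C$ (from $C\subset\mathrm{int}(\mathrm{dom}\,\varphi)$, as in the proof of Theorem~\ref{maintheorem1}), its weak lower semicontinuity, and $\langle m,w_n-u_n\rangle\to\langle m,w-u^*\rangle$, I pass to the upper limit to obtain $\langle T(a^*,w),w-u^*\rangle+\varphi(w)-\varphi(u^*)\ge\langle m,w-u^*\rangle$ for every $w\in K(u^*)$. Lemma~\ref{Minty} then yields $u^*\in\Gamma(a^*)$.

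Finally, weak lower semicontinuity of the objective closes the argument. Since $V\hookrightarrow Z$ continuously, $u_n\rightharpoonup u^*$ in $Z$, whence $\|u^*-z\|_Z^2\le\liminf_n\|u_n-z\|_Z^2$; since $a_n\to a^*$ in $L$ and $R$ is lower semicontinuous for $\|\cdot\|_L$ by $(\mathcal{H}_3)$, we have $R(a^*)\le\liminf_n R(a_n)$. As $u^*\in\Gamma(a^*)$, it follows that $J_\kappa(a^*)\le\frac12\|u^*-z\|_Z^2+\kappa R(a^*)\le\liminf_n J_\kappa(a_n)=\mu$, while $a^*\in A$ forces $J_\kappa(a^*)\ge\mu$. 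Hence $J_\kappa(a^*)=\mu$, so $a^*$ solves (\ref{inverseproblem}), which completes the proof.
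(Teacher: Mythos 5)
Your proposal is correct and follows essentially the same route as the paper's proof: the direct method with a minimizing sequence $\{a_n\}$ made compact in $L$ via $(\mathcal{H}_3)$ and the embedding $E\hookrightarrow L$, boundedness of the minimizers $u_n$ from the uniform coercivity $(\mathcal{H}_0)$, the splitting $\langle T(a_n,w_n),w_n-u_n\rangle=\langle T(a_n-a^*,w_n),w_n-u_n\rangle+\langle T(a^*,w_n),w_n-u_n\rangle$ handled by $(\mathcal{H}_2)$ together with the Minty formulation of Lemma~\ref{Minty} and the recovery sequences from $(\mathcal{H}_K)$, and the closing lower-semicontinuity argument. All the key steps, including the continuity of $\varphi$ on $C$ and the weak lower semicontinuity of $u\mapsto\frac12\|u-z\|_Z^2$, match the paper's argument.
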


\begin{proof}
It is based on the Weierstrass type theorem
and uses the compactness and lower semicontinuity arguments.
First, we show that the function $J_\kappa\colon B\to \mathbb{R}$ is well-defined. For this, we only need to show that $\min_{u\in\Gamma(a)}g(u):=\min_{u\in\Gamma(a)}\frac{1}{2}\|u-z\|_Z^2$ is well-defined. Since the function $g$ is bounded from below, there exists a minimizing sequence $\{u_n\} \subset \Gamma(a)$ such that
\begin{equation*}
\lim_{n\to\infty}g(u_n)=\inf_{u\in \Gamma(a)}g(u).
\end{equation*}
It follows from Theorem~\ref{maintheorem1} that $\Gamma(a)$ is nonempty, bounded and weakly closed.  The reflexivity of $V$ implies that $\Gamma(a)$ is a weakly compact subset of $V$. Without any loss of generality, we may assume that $u_n\rightharpoonup u^*$, as $n \to \infty$ with $u^*\in \Gamma(a)$. This convergence combined
with the weak lower semicontinuity of $g$
yields
\begin{equation*}
\inf_{u\in \Gamma(a)}g(u)\le g(u^*)\le \liminf_{n\to\infty}g(u_n)=  \lim_{n\to\infty}g(u_n)=\inf_{u\in \Gamma(a)}g(u),
\end{equation*}
which ensures that $J_\kappa$ is well-defined.

Next, by virtue of definition of $J_\kappa$ and hypothesis $(\mathcal{H}_3)$,
we have
\begin{equation*}
J_\kappa(a)=\inf_{u\in \Gamma(a)}g(u)+\kappa R(a)\ge \kappa R(a)\ge\kappa(\tau_1\|a\|_E-\tau_2)\ge -\kappa\tau_2,
\end{equation*}
which implies that $J_\kappa$ is bounded from below. Consequently, there exists a minimizing sequence $\{a_n\}\subset A$ such that
\begin{equation}\label{mina}
\lim_{n\to\infty}J_\kappa(a_n)=\inf_{b\in A}J_\kappa(b).
\end{equation}
By the following estimate
\begin{equation*}
J_\kappa(a_n)\ge \kappa R(a_n)\ge \kappa(\tau_1\|a_n\|_E-\tau_2),
\end{equation*}
we deduce that the sequence $\{a_n\}$ is bounded in $E$. Moreover, the compactness of the embedding
of $E$ into $L$ entails that the sequence $\{a_n\}$ is relatively compact in $L$. Without any loss of generality, we may suppose,
by passing to a subsequence, if necessary, that $a_n\to a$ in $L$, as $n \to \infty$.
Since $A$ is closed in $L$, see $(\mathcal H_1)$, we have $a\in A$.

Subsequently, let $\{u_n\} \subset V$
be a sequence such that
\begin{eqnarray}\label{minu}
u_n\in \Gamma(a_n)\quad\mbox{ and }\quad g(u_n)=\min_{v\in \Gamma(a_n)}g(v).
\end{eqnarray}
By using the uniform coercivity condition \eqref{coercive}, it follows that $\{u_n\}\subset \Gamma(A)$ is bounded as well.
Passing to a subsequence, we may assume that
$u_n\rightharpoonup u$ in $V$, as $n \to \infty$. Since $u_n\in\Gamma(a_n)$, we obtain from Lemma~\ref{Minty} that $u_n\in K(u_n)$ and
\begin{equation}\label{eqn13}
\langle T(a_n,v),v-u_n\rangle+\varphi(v)-\varphi(u_n)\ge \langle m,v-u_n\rangle
\ \ \textrm{for all}\ \ v\in K(u_n).
\end{equation}
The convergence $u_n\rightharpoonup u$ in $V$ and $(\mathcal{H}_K)$\,(ii) imply that $u\in K(u)$. Moreover, for any $w\in K(u)$, hypothesis $(\mathcal{H}_K)$\,(i) allows to choose a sequence $\{v_n\}\subset V$ with $v_n\in K(u_n)$ and $v_n\to w$, as $n\to\infty$.
Putting $v=v_n$ into (\ref{eqn13}) and using hypothesis $(\mathcal{H}_T)$, we obtain
\begin{align*}
\langle m,v_n-u_n\rangle&\le \langle T(a_n,v_n),v_n-u_n\rangle
+\varphi(v_n)-\varphi(u_n)\\[1mm]
&=\langle T(a_n-a,v_n),v_n-u_n\rangle+\langle T(a,v_n),v_n-u_n\rangle +\varphi(v_n)-\varphi(u_n).
\end{align*}
Since $v_n\to w$, $u_n\rightharpoonup u$ in $V$ and
$a_n\to a$ in $L$ { with $a\in A$ and $\{a_n\}\subset A$}, as $n\to\infty$,
we pass to the upper limit in the above inequality, as $n\to \infty$, and use
the continuity of $\varphi$ and
$(\mathcal{H}_2)$ to get
\begin{align*}
\langle m,w-u\rangle
&=\limsup_{n\to\infty} \langle m,v_n-u_n\rangle\\
&\le \limsup_{n\to\infty}\Big(\langle T(a_n-a,v_n),v_n-u_n\rangle+\langle T(a,v_n),v_n-u_n\rangle +\varphi(v_n)-\varphi(u_n)\Big)\\
&\le \limsup_{n\to\infty}\langle T(a_n-a,v_n),v_n-u_n\rangle+\limsup_{n\to\infty}\langle T(a,v_n),v_n-u_n\rangle\\
&\quad+\limsup_{n\to\infty}\varphi(v_n)-\liminf_{n\to\infty}\varphi(u_n)\\
&\le \langle T(a,w),w-u\rangle+\varphi(w)-\varphi(u)\ \,\mbox{ for all }w\in K(u).
\end{align*}
The latter proves that $u\in \Gamma(a)$.

Finally, since $g$ is convex and lower semicontinuous, so, it is also weakly lower-semicontinuous. Therefore, from condition $(\mathcal{H}_3)$, (\ref{mina})
and (\ref{minu}), we have
\begin{align*}
\inf_{b\in A}J_\kappa(b)&\le J_\kappa(a)=\min_{w\in \Gamma(a)}g(w)+\kappa R(a)\le g(u)+\kappa R(a)\\
&\le \liminf_{n\to \infty}g(u_n)+\liminf_{n\to \infty}\kappa R(a_n)\\
&\le\liminf_{n\to \infty}\Big(g(u_n)+\kappa R(a_n)\Big)\\
&=\liminf_{n\to \infty}\Big(\min_{w\in \Gamma(a_n)}g(w)+\kappa R(a_n)\Big)\\
&=\liminf_{n\to \infty} J_\kappa(a_n)=\inf_{b\in A}J_\kappa(b),
\end{align*}
which proves that $a\in A$ is also a solution of the  regularized optimization problem (\ref{inverseproblem}).
The proof of the theorem is complete.
\end{proof}

\section{An Implicit Obstacle Problem of  $p$-Laplacian type}\label{Section3}

In this section we study a regularized optimization problem governed by a nonlinear implicit obstacle problem involving
an operator of $p$-Laplace type.
Given $z\in L^p(\Omega;\mathbb{R}^N)$,
$1 < p < \infty$ and $\kappa>0$,
we consider the following problem.
\begin{problem}\label{problem3}
\emph{Find $a\in A$ such that
		\begin{equation*}
		a\in \arg\min_{b\in A}J_\kappa
		\ \,\mbox{ with }\ \ J_\kappa(a):=\min_{u\in \Gamma(a)}\|\nabla u-z\|_{L^p(\Omega;\mathbb{R}^N)}+\kappa \, TV(a),
		\end{equation*}
		where $\Gamma(a)$ is the solution set of the following nonlinear mixed quasi-variational inequality: given $a\in A$, find $u\in K(u)$ such that
		\begin{align*}
		&\int_\Omega a(x)\big(|\nabla u(x)|^{p-2}\nabla u(x),\nabla v(x)-\nabla u(x)\big)_{\mathbb{R}^N}\,dx+\int_\Omega\phi(v(x))\,dx\\
		&-\int_\Omega\phi(u(x))\,dx\ge \int_\Omega m(x)\big(v(x)-u(x)\big)\,dx
		\ \ \textrm{for all}\ \ v\in K(u).
		\end{align*}
		Here  $m\in L^p(\Omega)$, $\phi\colon \mathbb{R}\to \overline{\mathbb{R}}$ is a convex and lower semicontinuous functional such that $x\mapsto \phi(v(x))$ belongs to $L^1(\Omega)$ for all $v\in C$, and the set of admissible parameters $A$ is defined as
		\begin{equation*}
			A:=\big\{a\in L^\infty(\Omega)\mid 0<c_1\le a(x)\le c_2\ \,\mbox{ for a.e. }x\in\Omega\ \,\mbox{and }\, TV(a)\le c_3\big\}.
	\end{equation*}}
\end{problem}

The functional setting for the above problem is the following.
Assume that $\Omega$ is an open and bounded domain in $\mathbb{R}^N$, $1\le N<\infty$ with sufficiently smooth boundary $\partial \Omega$ and $1<p<\infty$. We introduce the function spaces $V=W^{1,p}_0(\Omega)$, $Z=L^p(\Omega;\mathbb{R}^N)$, and
$B=L^\infty(\Omega)$. Given a positive constant $c_0$ and a Lipschitz continuous function $c\colon \mathbb{R}\to \mathbb{R}$ with $0< c(s)\le c_0$ for all $s\in\mathbb{R}$, we consider a closed convex subset $C$ of $V$ and a set-valued mapping $K\colon C\to 2^C$ defined by
\begin{align*}
&C=\big\{u\in V\mid |\nabla u(x)|\le c_0\ \,\mbox{ a.e. }x\in\Omega\big\},\\[1mm]
&K(u)=\big\{v\in V\mid |\nabla v(x)|\le c(u(x))\ \,\mbox{ a.e. }x\in\Omega\big\},
\end{align*}
where the symbol $|\cdot|$ stands for the Euclidean norm in $\mathbb{R}^N$.
Recall, that for $f\in L^1(\Omega)$,
the total variation of $f$ is defined by
\begin{equation*}
TV(f):=\sup\bigg\{\int_\Omega f(x)\, \textrm{div} g(x)\,dx\mid g\in C^1(\Omega;\mathbb{R}^N)
\mbox{ with }|g(x)|\le 1\mbox{ for all }x\in \Omega\bigg\}.
\end{equation*}
Evidently, if $f\in W^{1,1}(\Omega)$,
then
$$
TV(f)=\|\nabla f\|_{L^1(\Omega;\mathbb{R}^N)}:=\int_\Omega|\nabla f(x)|\,dx.
$$
As usual, for $f\in L^1(\Omega)$, we say that $f$ has bounded variation, if $TV(f)<\infty$.
Also, we recall that the Banach space
\begin{equation*}
BV(\Omega):=\big\{f\in L^1(\Omega) \mid TV(f)<\infty\big\},
\end{equation*}
which is endowed with the norm
\begin{equation*}
\|f\|_{BV(\Omega)}:=\|f\|_{L^1(\Omega)}+TV(f) \ \,\mbox{ for all }\ f\in BV(\Omega).
\end{equation*}

The main result for Problem~\ref{problem3} reads as follows.
\begin{theorem}
For any $\kappa>0$, Problem~\ref{problem3} admits a solution.
\end{theorem}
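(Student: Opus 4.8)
The plan is to realize Problem~\ref{problem3} as a concrete instance of the abstract regularized problem \eqref{inverseproblem} and to conclude by verifying, for the present data, all hypotheses of Theorem~\ref{maintheorem2}. With $V = W^{1,p}_0(\Omega)$, $B = L^\infty(\Omega)$ and $Z = L^p(\Omega;\mathbb{R}^N)$, I would set $\langle T(a,u),v\rangle := \int_\Omega a(x)\,(|\nabla u(x)|^{p-2}\nabla u(x),\nabla v(x))_{\mathbb{R}^N}\,dx$, $\varphi(v) := \int_\Omega \phi(v(x))\,dx$, and $\langle m,v\rangle := \int_\Omega m(x)\,v(x)\,dx$, so that the mixed quasi-variational inequality in Problem~\ref{problem3} is exactly \eqref{mqvi}. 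The only structural departure from the abstract setting is the fidelity term: here $g(u) := \|\nabla u - z\|_{L^p(\Omega;\mathbb{R}^N)}$ measures the gradient rather than $u$, and $Z = L^p$ is not a Hilbert space. However, $g$ is the composition of the continuous linear map $u \mapsto \nabla u$ from $V$ into $L^p(\Omega;\mathbb{R}^N)$ with an affine shift and a norm; hence $g$ is convex, continuous, bounded below by $0$, and thus weakly lower semicontinuous on $V$. Since these are the only properties of the fidelity used in the proof of Theorem~\ref{maintheorem2}, that argument transfers once the hypotheses are checked.

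Next I would dispatch the routine hypotheses. For $(\mathcal{H}_T)$, linearity in $a$ is immediate, boundedness follows from H\"older's inequality, and monotonicity and continuity in $u$ are the standard properties of the weighted $p$-Laplacian, using the elementary inequality $(|\xi|^{p-2}\xi - |\eta|^{p-2}\eta,\xi-\eta)_{\mathbb{R}^N}\ge 0$ together with $a \ge c_1 > 0$. For $(\mathcal{H}_\varphi)$, convexity and lower semicontinuity of $\varphi$ descend from those of $\phi$ via Fatou's lemma, while the assumption $x \mapsto \phi(v(x)) \in L^1(\Omega)$ for $v \in C$ places $C$ in the domain of $\varphi$. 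For $(\mathcal{H}_0)$ I would take $C_0 = \{0\}$, noting $0 \in K(u)$ because $c > 0$, and derive coercivity from $\langle T(a,w),w\rangle \ge c_1\|\nabla w\|_{L^p}^p = c_1\|w\|_V^p$ (the equivalent norm on $W^{1,p}_0$), which dominates the affine lower bound for $\varphi$; as $p>1$, a choice such as $h(t) = c_1 t^{p-1} - c'$ for large $t$ tends to $+\infty$, uniformly in $a$ since $a \ge c_1$. For $(\mathcal{H}_1)$ and $(\mathcal{H}_3)$ I would take $L = L^1(\Omega)$, $E = BV(\Omega)$, $R = TV$: the embedding $L^\infty(\Omega)\hookrightarrow L^1(\Omega)$ is continuous and $BV(\Omega)$ embeds compactly into $L^1(\Omega)$; the set $A$ is bounded in $L^\infty$ and closed in $L^1$ (the pointwise bounds $c_1 \le a \le c_2$ pass to a.e.\ limits and $TV$ is $L^1$-lower semicontinuous); finally $TV$ is convex and $L^1$-lower semicontinuous, and $TV(a) \ge \|a\|_{BV(\Omega)} - c_2|\Omega|$ gives the coercivity in $(\mathcal{H}_3)$.

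For $(\mathcal{H}_2)$ I would argue directly on $\langle T(a_n-a,v_n),u_n\rangle = \int_\Omega (a_n-a)\,(|\nabla v_n|^{p-2}\nabla v_n,\nabla u_n)_{\mathbb{R}^N}\,dx$. Writing $\zeta_n := |\nabla v_n|^{p-2}\nabla v_n$ and $\zeta := |\nabla v|^{p-2}\nabla v$ and $p' := p/(p-1)$, the strong convergence $v_n \to v$ in $V$ yields $\zeta_n \to \zeta$ in $L^{p'}(\Omega;\mathbb{R}^N)$. Splitting $\zeta_n = (\zeta_n-\zeta) + \zeta$, the first contribution is bounded by $\|a_n-a\|_{L^\infty}\,\|\zeta_n-\zeta\|_{L^{p'}}\,\|\nabla u_n\|_{L^p} \to 0$, using boundedness of $\{a_n\}$ in $L^\infty$ and of $\{u_n\}$ in $V$; for the second contribution, $a_n \to a$ in $L^1$ with the $L^\infty$ bound gives $(a_n-a)\zeta \to 0$ in $L^{p'}$ by dominated convergence, so its pairing with the bounded sequence $\nabla u_n$ in $L^p$ tends to $0$. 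Hence the whole quantity tends to $0$, which is even stronger than the required $\limsup \le 0$.

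The main obstacle is $(\mathcal{H}_K)$, the $M$-continuity of $u \mapsto K(u) = \{v \in V : |\nabla v(x)| \le c(u(x))\text{ a.e.}\}$. Its engine is the compact embedding of $W^{1,p}_0(\Omega)$ into $L^p(\Omega)$ (Rellich--Kondrachov): if $x_n \rightharpoonup x$ in $V$ then $x_n \to x$ in $L^p$, so along a subsequence $x_n \to x$ a.e., and the Lipschitz continuity of $c$ forces $c(x_n(\cdot)) \to c(x(\cdot))$ a.e.\ and in $L^p$. Property (ii) then follows cleanly: from $|\nabla y_n| \le c(x_n(\cdot))$ one gets, for every nonnegative $\psi \in L^\infty(\Omega)$, $\int_\Omega \psi\,|\nabla y| \le \liminf_{n}\int_\Omega \psi\,|\nabla y_n| \le \lim_n \int_\Omega \psi\,c(x_n(\cdot)) = \int_\Omega \psi\,c(x(\cdot))$ by weak lower semicontinuity and strong convergence of the bound, whence $|\nabla y| \le c(x(\cdot))$ a.e. Property (i), the construction of a strongly convergent recovery sequence, is the delicate point: given $y \in K(x)$ the natural candidate is a dilation $y_n := \theta_n\,y$ with $\theta_n \uparrow 1$, so that $|\nabla y_n| = \theta_n|\nabla y| \le \theta_n\,c(x(\cdot))$, and one must choose $\theta_n$ so that $\theta_n\,c(x(\cdot)) \le c(x_n(\cdot))$ holds a.e.\ while preserving $y_n \to y$ in $V$. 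Since only a single constant $\theta_n$ is available, a.e.\ convergence of $c(x_n)$ alone need not control the essential infimum of the ratio $c(x_n)/c(x)$; making the rescaling rigorous will therefore require upgrading a.e.\ to near-uniform convergence by an Egorov argument and using the uniform positive lower bound on $c$ to handle the remaining small set (for instance by combining the dilation with a truncation there). This is the crux. Once $(\mathcal{H}_K)$ is established, Theorem~\ref{maintheorem1} guarantees each $\Gamma(a)$ is nonempty, bounded and weakly closed, and the adapted Weierstrass argument of Theorem~\ref{maintheorem2} produces a minimizer, completing the proof.
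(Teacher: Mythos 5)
Your global strategy is identical to the paper's: instantiate Theorem~\ref{maintheorem2} with $V=W^{1,p}_0(\Omega)$, $B=L^\infty(\Omega)$, $L=L^1(\Omega)$, $E=BV(\Omega)$, $R=TV$, and verify the hypotheses one by one. Most of your verifications match or improve on the paper's: your $(\mathcal{H}_2)$ argument via continuity of $w\mapsto |w|^{p-2}w$ from $L^p$ into $L^{p'}$ is a clean equivalent of the paper's weighted H\"older splitting with $\bigl(|\alpha|+|\beta|\bigr)^p\le 2^{p-1}\bigl(|\alpha|^p+|\beta|^p\bigr)$; your choice $C_0=\{0\}$ in $(\mathcal{H}_0)$ works, though the paper's route is shorter (take $C_0=C$ and note that $|\nabla u|\le c_0$ a.e.\ makes $C$ itself bounded, so the coercivity condition is essentially vacuous); and your remark that the fidelity term $\|\nabla u-z\|_{L^p(\Omega;\mathbb{R}^N)}$ does not literally fit the abstract form $\frac{1}{2}\|u-z\|_Z^2$ with $Z$ Hilbert, but that the proof of Theorem~\ref{maintheorem2} only uses convexity, strong continuity (hence weak lower semicontinuity), and boundedness below of $g$, identifies and repairs a mismatch that the paper passes over silently.

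The genuine gap is exactly where you locate it: the recovery-sequence property $(\mathcal{H}_K)$(i). The paper does not prove it either --- it disposes of $(\mathcal{H}_K)$ by quoting \cite[Lemma 4.1]{Kano}, so a citation would have sufficed at this point. Your direct attempt, as sketched, does not close: Egorov gives near-uniform convergence of $c(x_n(\cdot))$ only off a set of small measure; on the exceptional set a single dilation constant $\theta_n$ yields merely $|\nabla(\theta_n y)|\le \theta_n c_0$, which need not be $\le c(x_n(\cdot))$ there, and the truncation you propose perturbs $\nabla y$ in a way that is not obviously compatible with the constraint or with strong $V$-convergence; moreover, the uniform positive lower bound on $c$ that you invoke is not among the hypotheses, since $0<c(s)\le c_0$ with $c$ Lipschitz still permits $\inf_{\mathbb{R}}c=0$. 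The missing observation that makes a direct proof work is that the constraint defining $C$ upgrades the mode of convergence: every $x\in C$ extends by zero to a $c_0$-Lipschitz function on $\mathbb{R}^N$, so $\|x\|_{L^\infty(\Omega)}\le c_0\,\mathrm{diam}(\Omega)$ and any sequence $\{x_n\}\subset C$ is equi-Lipschitz; hence $x_n\rightharpoonup x$ in $V$ implies, by Arzel\`a--Ascoli, $x_n\to x$ uniformly on $\overline{\Omega}$. Then $c$ attains a positive minimum $c_{\min}$ on the compact range $[-c_0\,\mathrm{diam}(\Omega),\,c_0\,\mathrm{diam}(\Omega)]$, and with $\varepsilon_n:=\mathrm{Lip}(c)\,\|x_n-x\|_{L^\infty(\Omega)}\to 0$ one gets $c(x_n(\cdot))\ge c(x(\cdot))-\varepsilon_n\ge \bigl(1-\varepsilon_n/c_{\min}\bigr)c(x(\cdot))$ a.e., so the global dilation $y_n:=\bigl(1-\varepsilon_n/c_{\min}\bigr)y$ lies in $K(x_n)$ for $n$ large and converges to $y$ strongly in $V$ --- no Egorov or truncation is needed. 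With this step supplied (or replaced by the citation the paper uses), the rest of your argument is complete.
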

\begin{proof} 	
We shall use Theorem~\ref{maintheorem2} to prove the solvability of Problem~\ref{problem3}.
We will verify all hypotheses of Theorem~\ref{maintheorem2}.

Denote $E=BV(\Omega)$, $R(\cdot)=TV(\cdot)$, $\varphi(u)=\int_\Omega\phi(u(x))\,dx$ for all $u\in C$, and $L=L^1(\Omega)$.
Let the operator $T\colon B\times V\to V^*$
be defined by
\begin{equation*}
\langle T(a,u),v\rangle=\int_\Omega a(x)\big(|\nabla u(x)|^{p-2}\nabla u(x),\nabla v(x)\big)_{\mathbb{R}^N}\,dx
\ \ \mbox{\rm for all} \ \ u, v\in V.
\end{equation*}
From the above definition, we can readily see that $T$ enjoys hypothesis $(\mathcal{H}_T)$.
On the other hand, it follows
from~\cite[lemma 4.1]{Kano} that the set-valued mapping $K$ fulfills conditions $(\mathcal{H}_K)$. Moreover, by a direct argument, the convexity and lower semicontinuity of $\phi$ implies that functional $\varphi$ is convex and lower semicontinuous, see,
e.g.~\cite[p. 854]{bartosz}.
This combined with assumption that the map
$x\mapsto \phi(v(x))$ is in $L^1(\Omega)$
for all $v\in C$ yields condition  $(\mathcal{H}_{\varphi})$.

Next, we will verify that hypotheses
$(\mathcal{H}_0)$--$(\mathcal{H}_3)$ hold.
Since $C$ is bounded, so, in this case,
we can take $C_0=C$ so that $(\mathcal{H}_{0})$
is satisfied.

Moreover,
$B=L^\infty(\Omega)$ is continuously embedded
into $L= L^1(\Omega)$.
By virtue of definition of $A$,
it is closed in $L=L^1(\Omega)$ and
$A\subset B\cap E$ is bounded and closed in $B$. Furthermore, from the results in~\cite{acar,nashed}, we infer
that the embedding from $E=BV(\Omega)$ to $L^1(\Omega)$ is compact. Therefore, $(\mathcal{H}_1)$ holds.

We will verify condition $(\mathcal{H}_2)$.
Let $\{a_n\}\subset A$ and $a\in A$ be such that $\{a_n\}$ is bounded in $B$ and $a_n\to a$ strongly in $L$, as $n \to \infty$.
Also, let $\{u_n\}$, $\{v_n\}\subset V$ be such that $u_n\to u$ in $V$ and $\{v_n\}$ is bounded in $V$. We have
\begin{align*}
\langle T(a_n-a,u_n),v_n\rangle&=\int_\Omega \big(a_n(x)-a(x)\big)\big(|\nabla u_n(x)|^{p-2}\nabla u_n(x),\nabla v_n(x)\big)_{\mathbb{R}^N}\,dx\\
&\le\int_\Omega |a_n(x)-a(x)||\nabla u_n(x)|^{p-1}|\nabla v_n(x)|\,dx\\
&=\int_\Omega |a_n(x)-a(x)|^{\frac{p-1}{p}}|\nabla u_n(x)|^{p-1}|a_n(x)-a(x)|^{\frac{1}{p}}|\nabla v_n(x)|\,dx.
\end{align*}
Thus, by invoking the H\"older inequality, we obtain
\begin{align*}
&\langle T(a_n-a,u_n),v_n\rangle\le\bigg(\int_\Omega |a_n(x)-a(x)||\nabla u_n(x)|^p\,dx\bigg)^{\frac{p-1}{p}}\bigg(\int_\Omega |a_n(x)-a(x)||\nabla v_n(x)|^p\,dx\bigg)^{\frac{1}{p}}\\
&=\bigg(\int_\Omega |a_n(x)-a(x)||\nabla u_n(x)-\nabla u(x)+\nabla u(x)|^p\,dx\bigg)^{\frac{p-1}{p}}\bigg(\int_\Omega |a_n(x)-a(x)||\nabla v_n(x)|^p\,dx\bigg)^{\frac{1}{p}}.
\end{align*}
We use the elementary inequality  $\big(|\alpha|+|\beta|\big)^p\le 2^{p-1}(|\alpha|^p+|\beta|^p)$
which holds for $\alpha$, $\beta\in\mathbb{R}$
and $p>1$. Then,
\begin{equation}\label{eq2}
\langle T(a_n-a,u_n),v_n\rangle\le M_n\bigg(\int_\Omega 2^{p-1}|a_n(x)-a(x)|\big(|\nabla u_n(x)-\nabla u(x)|^p+|\nabla u(x)|^p\big)\,dx\bigg)^{\frac{p-1}{p}},
\end{equation}
where $M_n$ is defined by $\displaystyle M_n:=\bigg(\int_\Omega |a_n(x)-a(x)||\nabla v_n(x)|^p\,dx\bigg)^{\frac{1}{p}}.$

Since $\{a_n\}$ and $a$ are bounded in $L^\infty(\Omega)$,
$a_n-a\to 0$ in $L=L^1(\Omega)$, $u_n\to u$ in $V$, and $\{v_n\}$ is bounded in $V$,
we deduce that there is a constant $c_4>0$
such that $M_n\le c_4$ for all $n\in\mathbb N$,
and
\begin{equation*}
\int_\Omega 2^{p-1}|a_n(x)-a(x)||\nabla u_n(x)-\nabla u(x)|^p\,dx\to 0\ \,\mbox{ and }\
\int_\Omega 2^{p-1}|a_n(x)-a(x)||\nabla u(x)|^p\,dx
\to 0,
\end{equation*}
as $n\to\infty$. Hence, we obtain $\limsup_{n\to\infty}\langle T(a_n-a,u_n),v_n\rangle\le0.$ Therefore, $(\mathcal{H}_2)$ is satisfied.

Finally, from~\cite[Theorems~2.3 and 2.4]{acar}, \cite{Attouch} and~\cite{nashed},
we know that the functional $f\mapsto TV(f)$ is convex and lower semicontinuous in $L^1(\Omega)$-norm. Also, we have
\begin{equation*}
R(a)=TV(a)=\|a\|_{BV(\Omega)}-\|a\|_{L^1(\Omega)}
\ge \|a\|_{BV(\Omega)}-c_2|\Omega|
\ \ \mbox{\rm for all}\ \ a \in A.
\end{equation*}
This shows that condition $(\mathcal{H}_3)$ is fulfilled with $\tau_1=1$ and $\tau_2=c_2|\Omega|$.

Having verified all the hypotheses,
we are now in a position to apply, Theorem~\ref{maintheorem2} to conclude that Problem~\ref{problem3} has at least one solution $a\in A$. This completes the proof.
\end{proof}

\section{Concluding Remarks}
We have investigated the inverse problem of parameter identification in a nonlinear mixed quasi-variational inequality and applied our results to an implicit obstacle problem of $p$-Laplacian-type.
It would be of natural interest to develop numerical techniques for the inverse problem. For this, we will need to derive optimality conditions for the output-least-squares functional. This is a challenging task since the mapping $T$ is nonlinear and the solution to the inequality is not unique.
Furthermore, it is a nontrivial interesting open question to extend the results of this paper to quasi-hemivariational inequalities. This extension is important in many applications, see~\cite{MOS} and the references therein, where nonconvex potentials are used to model the physical phenomena, and the variational inequality approach is not possible. We intend to carry out the research in this direction in our future work.

\bigskip

\section*{Appendix} Let $X$ be a Banach space with its dual $X^*$,
and $\langle \cdot, \cdot \rangle_{X^*\times X}$
denote the duality pairing between $X$ and $X^*$.
Let $C$ be a nonempty subset of $X$.
We denote by $2^C$ all subsets of the set $C$.

\begin{definition}\label{defap}
\rm
An operator $A\colon X\to X^*$ is called {\it monotone}, if
\begin{eqnarray*}
\langle A(u)-A(v), u-v\rangle_{X^*\times X} \ge 0
\ \ \mbox{\rm for all} \ \ u, v \in X.
\end{eqnarray*}
It is called {\it hemicontinuous}, if for all $u$, $v$, $w\in X$ the functional
\begin{eqnarray*}
t\mapsto\langle A(u+tv),w\rangle_{X^*\times X}
\end{eqnarray*}
is continuous on $[0,1]$.
A set-valued mapping $K\colon C\to 2^C$
is called
$M$-{\it continuous (Mosco continuous)}, if it satisfies the following conditions
\begin{itemize}
\item[$(M_1)$] For any sequence $\{x_n\}\subset C$ with $x_n\rightharpoonup x$, and for each $y\in K(x)$, there exists a sequence $\{y_n\}$ such that $y_n\in K(x_n)$ and $y_n\to y$.
\item[$(M_2)$] For $y_n\in K(x_n)$ with $x_n \rightharpoonup x$ and $y_n\rightharpoonup y$, we have $y\in K(x)$, i.e., the graph of $K$ is sequentially weakly closed.
\end{itemize}
\end{definition}

We recall that a function
$\varphi\colon X\to  \overline{\mathbb{R}}:=\mathbb{R}\cup\{+\infty\}$ is called to be proper, convex and lower semicontinuous, if it fulfills, respectively,
the following conditions
\begin{eqnarray*}
&&
\mbox{dom}(\varphi):=\{u\in X\mid \varphi(u)<+\infty\} \neq\emptyset,
\\[2mm]
&&
\varphi(\lambda u+(1-\lambda)v)\le \lambda \varphi(u) +(1-\lambda)\varphi(v)\ \,\mbox{ for all }\lambda \in[0,1]\ \,\mbox{and }u, v\in X,
\\[2mm]
&&
\varphi(u)\le \liminf_{n\to \infty}\varphi(u_n)\ \,\mbox{ for all sequences }\{u_n\} \subset X \,\mbox{ with }u_n\to u\ {\rm in}\ X.
\end{eqnarray*}

\end{document}